\newtheorem{dummy}{}[]
\newtheorem{theorem}[dummy]{Theorem}
\newtheorem{proposition}[dummy]{Proposition}
\theoremstyle{definition}
\newtheorem{remark}[dummy]{Remark}
\title{The Two-Step Property and the Mathematics of Musical Scale Size}
\author[E.~Clader]{Emily Clader}
\address{Department of Mathematics, San Francisco State University, United States of America}
\email{eclader@sfsu.edu}
\author[V.~Jelmyer]{Vanessa Jelmyer}
\address{Department of Mathematics, San Francisco State University, United States of America}
\email{vjelmyer@mail.sfsu.edu}
\subjclass{00A65, 11A55}
\keywords{Mathematics and music, continued fractions}
\begin{document}

\begin{abstract}
A Pythagorean scale is a mathematical encoding of a musical scale as a finite list of numbers of the form $3^b/2^a$.  Previous work of the first author discussed the 2-step property as a way to measure which Pythagorean scales are the most ``evenly-spaced.''  In this paper, we give a mathematician's account of the characterization of the Pythagorean scales that have the 2-step property; compellingly, the list includes the 5-note, 7-note, and 12-note Pythagorean scales, which are well-known as the pentatonic, diatonic, and chromatic scales of music theory.  (After this preprint was initially posted, it was brought to our attention that these results are not new: they have previously appeared in the work of Carey and Clampitt, and related work has been done by several other members of the music theory community, now cited below.  We leave this preprint available because it is written for mathematicians with no musical background, and as such may provide helpful exposition for some audiences, but we no longer make any claim to originality of the content.)
\end{abstract}

\maketitle

\section{Introduction}

Musical notes can be encoded mathematically by identifying them with positive real numbers that represent their frequencies.  A musical scale---at least for the purposes of this work---consists of a finite collection of notes between a given starting note and the note one octave above.  Since notes that differ by an octave have frequencies that differ by a factor of two, we can view a scale mathematically (after normalizing) as a finite collection of numbers in the interval $[1,2]$ that includes the two endpoints.

One particularly classical way to produce scales in this sense is to consider the first $n-1$ powers of $3$ for some integer $n \geq 1$, and to divide each by an appropriate power of $2$ to ensure that the result lies in the interval $[1,2]$.  This is referred to as the {\bf $n$-note Pythagorean scale}.\footnote{For readers with more musical background, it is worth noting that powers of 3 correspond to intervals of a fifth, musically, so the Pythagorean method of producing scales is rooted in the musical notion of the ``circle of fifths."}  For instance, when $n=5$, the resulting scale (rearranged so that its elements are in increasing numerical order) is the following:
\[\frac{3^0}{2^0}, \; \frac{3^2}{2^3}, \; \frac{3^4}{2^6}, \; \frac{3^1}{2^1}, \; \frac{3^3}{2^4}, \; \frac{3^0}{2^{-1}}.\]

When a scale is arranged in increasing numerical order in this way---meaning its notes are ``ascending'' in the musical sense---the ratios between successive elements determine the musical distance between consecutive notes.  It might be desirable for a scale to consist of $n$ perfectly evenly-spaced notes, so that a melody played in that scale could be shifted up or down by some number of scale steps (``transposed,'' musically) without changing its essential character.  However, the only way that an $n$-note scale can be perfectly evenly-spaced in this way is if it is
\[1, \; 2^{1/n}, \; 2^{2/n}, \ldots, \; 2^{(n-1)/n}, \; 2,\]
so in particular, no Pythagorean scale can be perfectly evenly-spaced.

Perhaps the next best thing that one might hope for is that the scale has exactly two distinct ratios between successive elements when arranged in increasing numerical order; following previous work \cite{Clader} of the first author, we say that such a scale has the {\bf 2-step property}.  The main theorem of \cite{Clader} showed that the $n$-note Pythagorean scale has the 2-step property whenever $n$ is the denominator of a convergent---that is, a truncation of the continued fraction---of $\log_2(3)$.  In this work, we show that, by expanding attention from convergents to semi-convergents, one obtains a complete characterization of the Pythagorean scales with the 2-step property:

\begin{theorem}
\label{thm:main}
Let $n \geq 2$ be an integer.  Then the $n$-note Pythagorean scale has the $2$-step property if and only if $n$ is the denominator of a semi-convergent of $\log_2(3)$.
\end{theorem}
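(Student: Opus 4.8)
The plan is to pass to ``log-space,'' recognize the question as a sharp form of the three-distance theorem, and then match the resulting arithmetic condition with the definition of a semiconvergent.

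\textbf{Step 1 (reduction).} Put $\alpha:=\log_2 3$ and write $\langle x\rangle$ for the fractional part of $x$. Applying $\log_2$ sends the $n$-note Pythagorean scale, listed in increasing order inside $[1,2]$, bijectively and order-preservingly to the set $A_n:=\{\langle k\alpha\rangle:0\le k\le n-1\}\cup\{1\}\subseteq[0,1]$, and it sends the ratio of two consecutive scale elements to the gap between the corresponding consecutive elements of $A_n$. Hence the scale has exactly two distinct consecutive ratios if and only if $A_n$ has exactly two distinct consecutive gaps. Since $0$ is itself one of the points $\langle k\alpha\rangle$ (take $k=0$), cutting the circle $\R/\Z$ open at $0$ identifies the gaps of $A_n$ in $[0,1]$ with the lengths of the $n$ arcs cut out of $\R/\Z$ by $\langle 0\alpha\rangle,\dots,\langle(n-1)\alpha\rangle$. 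So the $2$-step property for the $n$-note scale is equivalent to the statement that these $n$ points cut $\R/\Z$ into arcs of exactly two distinct lengths.

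\textbf{Step 2 (continued-fraction bookkeeping).} Let $\alpha=[a_0;a_1,a_2,\dots]$ with convergents $p_k/q_k$, and set $\delta_k:=|q_k\alpha-p_k|$ (so, with the usual conventions $q_{-1}=0$, $p_{-1}=1$, one has $\delta_{-1}=1$). I will use the standard facts $q_{k+1}=a_{k+1}q_k+q_{k-1}$, the alternation of the sign of $q_k\alpha-p_k$, the strict decrease of $(\delta_k)_{k\ge-1}$, and $\delta_{k-1}=a_{k+1}\delta_k+\delta_{k+1}$. The denominators of the semiconvergents of $\alpha$ are exactly the integers $bq_k+q_{k-1}$ with $k\ge0$ and $1\le b\le a_{k+1}$ (the values $b=0$ and $b=a_{k+1}$ merely reproduce neighbouring $q_j$), so write $\mathcal D$ for this set. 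The elementary lemma needed is a partition statement: every integer $n\ge1$ has a \emph{unique} representation $n=bq_k+q_{k-1}+r$ with $k\ge0$, $1\le b\le a_{k+1}$, and $0\le r<q_k$; this holds because, as $(k,b)$ vary, the intervals $\bigl[\,bq_k+q_{k-1},\,(b+1)q_k+q_{k-1}\,\bigr)$ tile $[1,\infty)$, consecutive levels abutting since $(a_{k+1}+1)q_k+q_{k-1}=q_{k+1}+q_k$ begins level $k+1$. Thus $n\in\mathcal D$ precisely when $r=0$, and it remains to prove: for $n\ge2$, the $n$ circle points of Step 1 cut $\R/\Z$ into exactly two distinct arc-lengths if and only if $r=0$.

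\textbf{Step 3 (arc-tracking induction).} I would establish, by induction on $n$ (equivalently, lexicographically on $(k,b,r)$), the following sharp three-distance statement. With $n=bq_k+q_{k-1}+r$ as in Step 2, the configuration $C_n=\{\langle j\alpha\rangle:0\le j<n\}$ has arc-length multiset supported on
\[\delta_k\quad\text{and}\quad \delta_{k-1}-(b-1)\delta_k\qquad(r=0),\]
and on $\delta_k$, $\ \delta_{k-1}-b\delta_k$, and $\delta_{k-1}-(b-1)\delta_k$ (the last being the sum of the first two) when $1\le r<q_k$; in the latter case these three values are genuinely distinct and positive, since $b\le a_{k+1}$ forces $\delta_{k-1}-b\delta_k\ge\delta_{k+1}>0$ and a short computation with the recursion rules out $\delta_{k-1}-b\delta_k=\delta_k$. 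The inductive step is the crux: passing from $C_n$ to $C_{n+1}$ inserts exactly the one point $\langle n\alpha\rangle$, and one must show it always lands inside a copy of the currently longest arc and subdivides it into the two appropriate shorter lengths. This is where the sign alternation and the identity $\delta_{k-1}=a_{k+1}\delta_k+\delta_{k+1}$ enter, and where one must check that the pattern propagates correctly through each of the three kinds of index increment: $r\mapsto r+1$ inside a level, the carry from $r=q_k-1$ to the next value of $b$, and the carry from $b=a_{k+1}$ to the next level $k$. Granting this, the theorem is immediate from Step 2: for $n\ge2$, $C_n$ has exactly two distinct arc-lengths if and only if $r=0$, i.e.\ if and only if $n\in\mathcal D$, i.e.\ if and only if $n$ is the denominator of a semiconvergent of $\log_2 3$.

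\textbf{The main obstacle} is Step 3 --- making the inductive arc-bookkeeping precise, in particular pinning down which arc the incoming point $\langle n\alpha\rangle$ splits and verifying that the two- and three-length descriptions survive all three index carries. (An alternative is to quote a precise form of the three-distance theorem, such as Slater's enumeration of the gaps and their multiplicities, and read the criterion ``two lengths $\iff r=0$'' off of it; but the partition lemma of Step 2 is still needed afterwards to recognize the admissible $n$ as exactly the semiconvergent denominators.) One small point should be dispatched first: $\alpha=\log_2 3$ is irrational --- immediate from unique factorization --- so its continued fraction is infinite and the points $\langle k\alpha\rangle$ are pairwise distinct, which the whole argument tacitly uses.
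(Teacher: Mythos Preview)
Your approach is correct and takes a genuinely different route from the paper. You pass to log-space and recognize the problem as the sharp form of the three-distance (Steinhaus) theorem: the $n$ points $\langle j\alpha\rangle$, $0\le j<n$, cut $\R/\Z$ into arcs of exactly two lengths precisely when $n$ is a semiconvergent denominator of $\alpha$. Your Step~2 partition lemma cleanly identifies the admissible $n$, and Step~3---which you rightly flag as the crux---is exactly the refined three-gap theorem of S\'os, Slater, and van Ravenstein; citing it is legitimate, though your inductive sketch would need its bookkeeping completed to stand on its own.

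The paper, by contrast, never leaves the multiplicative world of the scales. It leans on the earlier structural result (Theorem~\ref{thm:old}) that the $b_i$-note scale decomposes into $b_{i-1}$ ``blocks'' of Type~A or Type~B, and then runs an explicit deletion argument (Proposition~\ref{prop:deletion}): removing the notes with the largest exponents of $3$ one at a time, it tracks by induction which position in its block each deleted note occupies, and shows that the configuration stays two-step exactly until one passes a semiconvergent boundary, at which point a third step size appears. Your route is shorter and more conceptual, makes transparent that nothing is special about $\log_2 3$, and situates the result inside a classical theorem; the paper's route is self-contained (no external three-gap machinery required beyond its own prior theorem) and yields the explicit step sizes $I_i$ and $I_i^kJ_i$ as a byproduct, which it then exploits for the musical interpretation in the introduction.
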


See Section~\ref{sec:background} for a review of the notions of convergents and semi-convergents, and in particular Remark~\ref{rmk:whyCF} for a heuristic explanation of their appearance in this context.

\begin{remark}
\label{remark}
After this preprint was initially posted, it was brought to our attention that the result of Theorem~\ref{thm:main} is not new: it is stated as the ``Characterization Theorem'' in the paper \cite{CC1989} of Carey and Clampitt and is proven in \cite{CC2012}.  Carey and Clampitt's work fits into a large body of research in the music theory community on this subject (of which we were unaware at the time of our writing, and which we are grateful to an anonymous referee for pointing out), including Wilson's work on moment of symmetry (MOS) scales \cite{Wilson}, Hellegouarch's work on natural scales \cite{Hellegouarch}, and Cubarsi's work on cyclic scales \cite{Cubarsi}.  We leave this preprint available because it is written for mathematicians with no musical background, and as such may provide helpful exposition for some audiences, but we no longer make any claim to originality of the content.
\end{remark}

Several of the first few semi-convergents of $\log_2(3)$ are
\[\frac{3}{2}, \; \frac{8}{5}, \; \frac{5}{3}, \; \frac{19}{12}, \; \frac{11}{7}, \ldots\]
Compellingly, the Pythagorean scales with sizes given by the denominators of these semi-convergents include the 5-note scale, the 7-note scale, and the 12-note scale.  All three of these are very important musically: the 5-note scale is the ``pentatonic'' scale that appears prominently in the music of China, Scotland, and other traditions, as well as in certain types of jazz; the 7-note scale is the ``diatonic'' or ``do-re-mi'' scale that can be played, for example, on the white keys of a piano; and the 12-note scale is the ``chromatic'' scale that can be played by using all the keys, black and white, on a piano.

\begin{remark}
It is worth acknowledging that, both musically and mathematically, the 7-note scale and the 12-note scale are quite different in character.  Musically, the chromatic scale is understood to be perfectly evenly-spaced, so the 12-note Pythagorean scale can be viewed as an imperfect approximation of an evenly-spaced scale, and the fact that it is a good approximation is evidenced by the fact that (a) there are only two distinct step sizes, as confirmed by Theorem~\ref{thm:main}; and (b) the two step sizes are quite close to one another.  In particular, in the notation introduced in Section~\ref{sec:background}, we will see that the two step sizes in the 12-note scale are
\[I_4^{-1} = \frac{3^{-5}}{2^{-8}} \approx 1.0535 \;\;\;\; \text{ and } J_4^{-1} = \frac{3^7}{2^{11}} \approx 1.0679.\]
By contrast, the diatonic scale is understood musically as a subset of the chromatic scale, where some of its notes are separated by a single chromatic scale-step and others are separated by two chromatic scale-steps.  Approximating this scale by the 7-note Pythagorean scale, we indeed see two distinct step sizes arising (as confirmed, again, by Theorem~\ref{thm:main}), and as we will see in Section~\ref{sec:proof1}, these two step sizes are
\[I_4^{-1} \approx 1.0535 \;\;\;\; \text{ and } J_4^{-1}I_4^{-1} = 1.125.\]
These are much less close numerically than the two step sizes of the 12-note Pythagorean scale, but this makes perfect sense musically: while the 2-step property of the 12-note Pythagorean scale is an approximation of the impossibility of precisely even spacing, the 2-step property of the 7-note Pythagorean scale is a manifestation of approximating a scale that was intended to have two step sizes in the first place.
\end{remark}

\subsection*{Acknowledgments}  The authors thank Matthias Beck, ChunKit Lai, and Kim Seashore for valuable conversations related to this project.  We are particularly grateful to the anonymous referee who pointed us to the music-theoretic literature on this topic, which shows that these results have been proven and studied by numerous others.  The first author was supported by NSF CAREER grant 2137060.

\section{Background}
\label{sec:background}

We begin with some basic background on continued fractions before turning to the more specific background on Pythagorean scales and the $2$-step property.

\subsection{Continued fractions}

A {\bf finite continued fraction}\footnote{The continued fractions considered in this paper are sometimes referred to as ``simple'' continued fractions, which are a special case of a more general notion obtained by replacing the ones in the numerators with other integers.} is an expression of the form
\[[k_0; k_1, \ldots, k_i] \coloneq k_0+\frac{1}{k_1+\frac{1}{\displaystyle k_2+\cdots+ \frac{1}{k_i}}}\]
where $k_0, \ldots, k_i$ are positive integers.  Such an expression is manifestly a rational number.  A {\bf continued fraction} is a limit of such expressions:
\[[k_0; k_1, k_2, \ldots] \coloneq k_0+\frac{1}{k_1+\frac{1}{\displaystyle k_2+\cdots}} = \lim_{i \rightarrow \infty} [k_0; k_1, \ldots, k_i]\]
for a sequence of integers $\{k_i\}$.  It is well-known (see, for example, \cite[Theorem 14]{Khinchin}) that every irrational real number has a unique expression as a continued fraction, whose terms $k_i$ can be calculated explicitly via a version of the Euclidean algorithm.  The finite continued fractions $[k_0; k_1, \ldots, k_i]$ obtained by truncating an (infinite) continued fraction are referred to as its {\bf convergents}, and if
\[x = [k_0; k_1, k_2, \ldots]\]
for some irrational number $x$, these convergents are rational approximations of $x$.  We denote the numerator and denominator of the convergents by
\[[k_0; k_1, \ldots, k_i] =: \frac{a_i}{b_i}.\]
These satisfy a fundamental and well-known recursion (see, for example, \cite[Theorem 1]{Khinchin}):
\begin{align}
\label{eq:recursion}
    \nonumber &a_i = k_ia_{i-1} + a_{i-2},\\
    &b_i = k_ib_{i-1} + b_{i-2},
\end{align}
with base cases given by
\begin{align*}
    &a_{-1} = 1, \qquad a_0 = k_0,\\
    &b_{-1} = 0, \qquad b_0 = 1.
\end{align*}

Interpolating between the convergents are the {\bf semi-convergents}, which are the rational numbers
\begin{equation}
    \label{eq:semiconvergents}
    \frac{a_{i-1} + ka_{i}}{b_{i-1} + kb_{i}}
\end{equation}
for $0 \leq k \leq k_{i+1}$ and $i \geq 1$.  Note that the cases $k=0$ and $k=k_{i+1}$ recover the convergents as special cases of semi-convergents, in light of \eqref{eq:recursion}.  Both the convergents and semi-convergents of $x$ are ``best approximations'' of $x$ in the sense that they are closer to $x$ than any other rational number with smaller or equal denominator \cite[Theorem 15]{Khinchin}.

Throughout this paper, we will always consider the continued fraction expression for the irrational number
\[\log_2(3) = 1.5849625\ldots,\]
which begins
\[\log_2(3) = [1; 1, 1, 2, 2, 3, 1, 5, 2, 23, \ldots].\]
In particular, the first few convergents are as follows:
\begin{align}
\label{eq:convergents}
\nonumber &[1;1] = 1+ \frac{1}{1} = \frac{2}{1}\\
\nonumber &[1;1,1] = 1+ \frac{1}{1+\frac{1}{1}} = \frac{3}{2} = 1.5\\
\nonumber &[1;1,1,2] = 1+ \frac{1}{1+\frac{1}{1+ \frac{1}{2}}} = \frac{8}{5} = 1.6\\
&[1;1,1,2,2] = 1+ \frac{1}{1+\frac{1}{1+ \frac{1}{2+ \frac{1}{2}}}} = \frac{19}{12} = 1.58333\ldots.
\end{align}
From these calculations, we see that the first few values of $a_i$, $b_i$, and $k_i$ are given by the following table, from which the first few semi-convergents are easily read:
\begin{equation}
\label{eq:tableofks}
\begin{tabular}{c|c|c|c}
$i$ & $a_i$ & $b_i$ & $k_i$\\
\hline
0 & 1 & 1 & 1\\
1 & 2 & 1 & 1\\
2 & 3 & 2 & 1\\
3 & 8 & 5 & 2\\
4 & 19 & 12 & 2.
\end{tabular}
\end{equation}
For instance, the semi-convergents with $i=2$ have $0 \leq k \leq k_3=2$, so they are
\[\frac{a_1}{b_1} = \frac{2}{1}, \;\;\; \frac{a_1+a_2}{b_1+b_2} = \frac{5}{3}, \;\;\; \frac{a_1+2a_2}{b_1+2b_2} = \frac{3}{2}.\]
The first and last of these are consecutive convergents of $\log_2(3)$, whereas the second interpolates between those convergents.  Along the same lines, the list of the first few semi-convergents of $\log_2(3)$ is as follows (where the boxed entries are the convergents):
\[\boxed{\frac{2}{1}}, \; \frac{5}{3}, \; \boxed{\frac{3}{2}}, \; \frac{11}{7}, \; \boxed{\frac{8}{5}}, \; \frac{27}{17}, \; \frac{46}{29}, \boxed{\frac{19}{12}}, \; \ldots\]
The denominators of these semi-convergents will be the scale sizes of interest, so we now turn to a discussion of Pythagorean scales.

\subsection{Pythagorean scales with the 2-step property}

Recall from the introduction that the {\bf $n$-note Pythagorean scale} is the list of numbers
\[P_n = \left\{ \frac{3^b}{2^a} \; \left| \; 0 \leq b < n, \;\; 1 \leq \frac{3^b}{2^a} \leq 2\right.\right\}, \]
where $a$ and $b$ are integers.  It is straightforward to see that, whereas both $\frac{3^0}{2^0} =1$ and $\frac{3^0}{2^{-1}} = 2$ are elements of $P_n$, as long as $0 < b < n$ there is a unique integer $a$ such that $\frac{3^b}{2^a}$ is in $P_n$.

For example, the 5-note Pythagorean scale has the following elements (which we refer to, in musical parlance, as its ``notes''):
\[P_5 = \left\{\frac{3^0}{2^0}, \; \frac{3^0}{2^{-1}}, \; \frac{3^1}{2^1}, \; \frac{3^2}{2^3}, \; \frac{3^3}{2^4}, \; \frac{3^4}{2^6}\right\}.\]
Arranged in increasing numerical order, these are as follows:
\begin{equation}
\label{eq:P5}
P_5 = \left\{\frac{3^0}{2^0}, \; \frac{3^2}{2^3}, \; \frac{3^4}{2^6}, \; \frac{3^1}{2^1}, \; \frac{3^3}{2^4}, \; \frac{3^0}{2^{-1}}\right\}.
\end{equation}

We say that a Pythagorean scale has the {\bf $k$-step property} if there are exactly $k$ distinct ratios between successive notes in the scale, when arranged in increasing numerical order.  For example, $P_5$ has the 2-step property, since the ratio between the successive elements in \eqref{eq:P5} are all either
\[\frac{3^2}{2^3} \;\;\; \text{ or }  \;\;\; \frac{3^{-3}}{2^{-5}}.\]
By contrast, the $6$-note Pythagorean scale has the 3-step property because, arranged in increasing numerical order, we have
\[P_6 = \left\{\frac{3^0}{2^0}, \; \frac{3^2}{2^3}, \; \frac{3^4}{2^6}, \; \frac{3^1}{2^1}, \; \frac{3^3}{2^4}, \; \frac{3^5}{2^7}, \; \frac{3^0}{2^{-1}}\right\},\]
which has the three distinct ratios
\[\frac{3^2}{2^0}, \;\;\;\;\;\; \frac{3^{-3}}{2^{-5}}, \;\;\; \text{ and } \;\;\; \frac{3^{-5}}{2^{-8}}\]
between successive notes.

Roughly speaking, the smaller the $k$ for which $P_n$ has the $k$-step property, the more ``evenly-spaced'' the notes in $P_n$ are.  As observed in the introduction, no Pythagorean scale can have the $1$-step property, so we are primarily interested in determining which Pythagorean scales have the $2$-step property, as these are the most evenly-spaced.

A large family of Pythagorean scales with the $2$-step property was described in previous work of the first author \cite{Clader} (which, as noted in Remark~\ref{remark}, unknowingly replicated work of Carey and Clampitt \cite{CC1989, CC2012}).  Namely, if $a_i$ and $b_i$ are the numerator and denominator of the $i$th convergent of $\log_2(3)$, as above, and we set
\[I_i:= \frac{3^{b_{i-1}}}{2^{a_{i-1}}}, \;\;\;\; J_i:= \frac{3^{b_{i-1}-b_i}}{2^{a_{i-1}-a_i}},\]
then the result is the following:

\begin{theorem}[\cite{Clader, CC1989, CC2012}]
\label{thm:old}
For any integer $i \geq 1$, the $b_i$-note Pythagorean scale has the 2-step property.  More specifically, the $b_i$-note Pythagorean scale has one of the following two types:
\begin{itemize}
\item {\bf Type A}: The scale consists of $b_{i-1}$ blocks, each of which has either $k_i$ or $k_i-1$ steps of size $I_i$ followed by one step of size $J_i$; or
\item {\bf Type B}: The scale consists of $b_{i-1}$ blocks, each of which has one step of size $J_i^{-1}$ followed by either $k_i$ or $k_i-1$ steps of size $I_i^{-1}$.
\end{itemize}
\end{theorem}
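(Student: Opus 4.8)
The plan is to pass to logarithms, so that the statement becomes a question about the cyclic order of an orbit of an irrational rotation, and then to read the block structure off a classical Sturmian word. Write $\alpha = \log_2(3)$ and, for $t \in \R$, let $\langle t\rangle = t - \lfloor t\rfloor$. Applying $\log_2$ to every note turns the $n$-note Pythagorean scale into the set $\{\langle j\alpha\rangle : 0 \le j < n\} \cup \{1\} \subseteq [0,1]$, and the successive ratios of $P_n$ into the successive gaps of this set; since $1 \equiv 0$ on $\R/\Z$, these gaps are exactly the $n$ arcs cut out by the $n$ distinct circle points $0, \langle \alpha\rangle, \dots, \langle(n-1)\alpha\rangle$, read clockwise (i.e.\ in increasing order) starting from $0$. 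Throughout I assume $i \ge 2$; the case $i=1$ is the one-note scale $P_1 = \{1,2\}$ and is handled directly (with the degenerate reading $k_1 - 1 = 0$).

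First I would translate the two candidate step sizes into log-lengths. From the recursion \eqref{eq:recursion} and the determinant identity $a_j b_{j-1} - a_{j-1}b_j = (-1)^{j-1}$, the numbers $\delta_j := (-1)^j(b_j\alpha - a_j)$ are positive and satisfy $b_{i-1}\delta_i + b_i\delta_{i-1} = 1$, and a short computation gives $\log_2 I_i = (-1)^{i-1}\delta_{i-1}$ and $\log_2 J_i = (-1)^{i-1}(\delta_{i-1} + \delta_i)$. Hence, written as ratios exceeding $1$, the two step sizes are $\{I_i, J_i\}$ when $i$ is odd (Type A, with $I_i$ smaller) and $\{I_i^{-1}, J_i^{-1}\}$ when $i$ is even (Type B, with $I_i^{-1}$ smaller); in both cases the ``short'' step has log-length $\delta_{i-1}$, the ``long'' step has log-length $\delta_{i-1}+\delta_i$, and they differ because $\delta_i > 0$.

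The crux is to identify the ``next note up'' permutation of $P_{b_i}$. The key input is the standard fact that $b_{i-1}$ is the largest denominator of a best approximation of the second kind to $\alpha$ below $b_i$, so that $\|q\alpha\| := \min_{p\in\Z}|q\alpha - p| \ge \delta_{i-1}$ for all $1 \le q < b_i$ (see, e.g., \cite{Khinchin}). It follows that any two of the points $\langle j\alpha\rangle$, $0 \le j < b_i$, lie at circular distance $\ge \delta_{i-1}$, whereas $\langle(j + (-1)^{i-1}b_{i-1})\alpha\rangle$ lies exactly $\delta_{i-1}$ clockwise of $\langle j\alpha\rangle$, since $\langle(-1)^{i-1}b_{i-1}\alpha\rangle = \delta_{i-1}$. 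Therefore nothing lies strictly between them, and the clockwise neighbour of $\langle j\alpha\rangle$ in $P_{b_i}$ is $\langle\sigma(j)\alpha\rangle$ with $\sigma(j) \equiv j + (-1)^{i-1}b_{i-1} \pmod{b_i}$. Consequently the gap just after $\langle j\alpha\rangle$ has log-length $\delta_{i-1}$ unless forming $j + (-1)^{i-1}b_{i-1}$ requires reduction modulo $b_i$ (``wrapping''), in which case evaluating $\langle(b_{i-1}-b_i)\alpha\rangle$ for $i$ odd, or $\langle(b_i - b_{i-1})\alpha\rangle$ for $i$ even, shows the gap has log-length $\delta_{i-1}+\delta_i$. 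I expect this step to be the main obstacle: the logarithm dictionary and the combinatorics below are routine, but pinning down $\sigma$ requires correctly invoking the second-kind best-approximation bound and carefully tracking the parity-dependent orientation of the rotation --- it is precisely here that moving by $(-1)^{i-1}b_{i-1}$ rather than by $b_{i-1}$ matters, and where Types A and B first diverge.

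It then remains to read off the block structure, which is now a finite combinatorial exercise. Listing the steps of $P_{b_i}$ in order means iterating $\sigma$ from $0$, so the word of step sizes is $w_0 w_1 \cdots w_{b_i-1}$, where $w_t = 1$ if $\sigma$ wraps at $\sigma^t(0)$ and $w_t = 0$ otherwise. Counting wraps shows $\sum_{t<s} w_t$ equals $\lfloor s\, b_{i-1}/b_i\rfloor$ for $i$ odd, and $\lceil s\, b_{i-1}/b_i\rceil$ for $i$ even, so $(w_t)$ is the periodic mechanical (Sturmian) word of slope $b_{i-1}/b_i$: it has $b_{i-1}$ ones and $b_i - b_{i-1}$ zeros per period, and since $b_i/b_{i-1} = k_i + b_{i-2}/b_{i-1} \in (k_i, k_i + 1]$, consecutive ones are separated by $k_i - 1$ or $k_i$ zeros. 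For $i$ odd one has $w_0 = 0$ (and $w_{b_i-1} = 1$), so the word parses into $b_{i-1}$ blocks $0^s 1$ with $s \in \{k_i - 1, k_i\}$, namely $s$ copies of $I_i$ followed by one $J_i$: this is Type A. For $i$ even one has $w_0 = 1$ (and $w_{b_i-1} = 0$), so it parses into $b_{i-1}$ blocks $1\,0^s$, one $J_i^{-1}$ followed by $s$ copies of $I_i^{-1}$: this is Type B. Since $b_i > b_{i-1}$ for $i \ge 2$, both a one and a zero occur, so exactly two distinct step sizes appear and $P_{b_i}$ has the $2$-step property.
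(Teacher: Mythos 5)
Your rotation-theoretic route is genuinely different from the one this paper relies on: Theorem~\ref{thm:old} is imported here from \cite{Clader}, whose proof (as the remark following \eqref{eq:P12steps} indicates) proceeds via the recursion \eqref{eq:recursion}, showing inductively that the $b_i$-note scale refines the $b_{i-1}$-note scale with the old notes as block boundaries. Your translation to the gaps of the orbit $\{\langle j\alpha\rangle\}_{0\le j<b_i}$, the dictionary $\log_2 I_i=(-1)^{i-1}\delta_{i-1}$ and $\log_2 J_i=(-1)^{i-1}(\delta_{i-1}+\delta_i)$, the parity-to-Type correspondence (checked against the paper's examples: $b_3=5$ is Type A, $b_4=12$ is Type B), and the mechanical-word bookkeeping at the end are all correct.

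However, there is a genuine gap at exactly the step you flag as the crux. The bound $\|q\alpha\|\ge\delta_{i-1}$ for $1\le q<b_i$ shows that no scale point lies within circular distance $\delta_{i-1}$ of $\langle j\alpha\rangle$, and hence that when $j+(-1)^{i-1}b_{i-1}$ stays in $[0,b_i)$ the clockwise neighbour is $\langle\sigma(j)\alpha\rangle$ at displacement exactly $\delta_{i-1}$. But in the wrapping case the circle point lying $\delta_{i-1}$ clockwise of $\langle j\alpha\rangle$ is \emph{not} a scale point, and all you have exhibited is \emph{some} scale point (namely $\langle\sigma(j)\alpha\rangle$) at clockwise displacement $\delta_{i-1}+\delta_i$; nothing you have said rules out a different scale point at a displacement strictly between $\delta_{i-1}$ and $\delta_{i-1}+\delta_i$, so ``the clockwise neighbour is $\langle\sigma(j)\alpha\rangle$'' does not yet follow for wrapping $j$. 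The fix is short: the $b_i-b_{i-1}$ non-wrapping gaps sum to $(b_i-b_{i-1})\delta_{i-1}$, the total of all $b_i$ gaps is $1=b_i\delta_{i-1}+b_{i-1}\delta_i$, so the $b_{i-1}$ wrapping gaps sum to $b_{i-1}(\delta_{i-1}+\delta_i)$ while each is at most $\delta_{i-1}+\delta_i$ by your displacement computation; hence each equals $\delta_{i-1}+\delta_i$ and the neighbour is indeed $\langle\sigma(j)\alpha\rangle$. With that argument inserted, your proof is complete, and it has the virtue of making the connection to the three-distance theorem and Sturmian words explicit, whereas the paper's inductive route stays entirely within integer arithmetic on the exponents.
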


For example, the table in \eqref{eq:tableofks} shows that $a_2 = 3$, $b_2 = 2$, $a_3 = 8$, and $b_3 = 5$, so
\[I_3 = \frac{3^2}{2^3} \;\;\; \text{ and } \;\;\; J_3 = \frac{3^{-3}}{2^{-5}}.\]
The Pythagorean scale with $b_3=5$ notes, which we considered above, indeed can be broken down into $b_2 = 2$ blocks (boxed in the diagram below), the first of which has $k_3 = 2$ steps of size $I_3$ followed by one step of size $J_3$ and the second of which has $k_3 - 1 = 1$ step of size $I_3$ followed by one step of size $J_3$:

\begin{equation}
\label{eq:P5steps}
    \begin{tikzpicture}
        \tikzstyle{dot}=[circle, fill, inner sep=0pt, minimum size=0.3cm]

        \draw[thick] (0,-0.7) -- (3,-0.7);
        \draw[thick] (3,-0.7) -- (3, -0.5);
        \draw[thick] (3,0.5) -- (3,1.75);
        \draw[thick] (3,1.75) -- (0,1.75);
        \draw[thick] (0,1.75) -- (0,0.5);
        \draw[thick] (0,-0.5) -- (0,-0.7);

        \node (A1) at (0,0) {$\displaystyle\frac{3^0}{2^0}$};
        \node (A2) at (1,0) {$\displaystyle\frac{3^2}{2^3}$};
        \node (A3) at (2,0) {$\displaystyle\frac{3^4}{2^6}$};
        \node (A4) at (3,0) {$\displaystyle\frac{3^1}{2^1}$};

        \draw[->, thick, >=stealth, bend left=75] (A1) to node[above] {\( I_3 \)} (A2);
        \draw[->, thick, >=stealth, bend left=75] (A2) to node[above] {\( I_3 \)} (A3);
        \draw[->, thick, >=stealth, bend left=75] (A3) to node[above] {\(J_3\)} (A4);

        \begin{scope}[shift={(3,0)}]
            \draw[thick] (0,-0.7) -- (2,-0.7);
            \draw[thick] (2,0.5) -- (2, 1.75);
            \draw[thick] (2,-0.7) -- (2, -0.5);
            \draw[thick] (2,1.75) -- (0,1.75);

        \node (B2) at (1,0) {$\displaystyle\frac{3^3}{2^4}$};
        \node (B3) at (2,0) {$\displaystyle\frac{3^0}{2^{-1}}$};

         \draw[->, thick, >=stealth, bend left=75] (A4) to node[above] {\( I_3 \)} (B2);
        \draw[->, thick, >=stealth, bend left=75] (B2) to node[above] {\( J_3 \)} (B3);

        \end{scope}

    \end{tikzpicture}
\end{equation}

Thus, the $5$-note Pythagorean scale has Type A.  A similar calculation applies to the Pythagorean scale with $b_4 = 12$ notes, which has Type B:

\begin{equation}
\label{eq:P12steps}
    \begin{tikzpicture}
        \tikzstyle{dot}=[circle, fill, inner sep=0pt, minimum size=0.3cm]

        \draw[thick] (0,-0.7) -- (2,-0.7);
        \draw[thick] (2,-0.7) -- (2, -0.5);
        \draw[thick] (2,0.5) -- (2,1.75);
        \draw[thick] (2,1.75) -- (0,1.75);
        \draw[thick] (0,1.75) -- (0,0.5);
        \draw[thick] (0,-0.5) -- (0,-0.7);

        \node (A1) at (0,0) {$\displaystyle\frac{3^0}{2^0}$};
        \node (A2) at (1,0) {$\displaystyle\frac{3^7}{2^{11}}$};
        \node (A3) at (2,0) {$\displaystyle\frac{3^2}{2^3}$};

        \draw[->, thick, >=stealth, bend left=75] (A1) to node[above] {\( J_4^{-1} \)} (A2);
        \draw[->, thick, >=stealth, bend left=75] (A2) to node[above] {\( I_4^{-1} \)} (A3);

        \begin{scope}[shift={(2,0)}]
            \draw[thick] (0,-0.7) -- (2,-0.7);
            \draw[thick] (2,0.5) -- (2, 1.75);
            \draw[thick] (2,-0.7) -- (2, -0.5);
            \draw[thick] (2,1.75) -- (0,1.75);

        \node (B1) at (1,0) {$\displaystyle\frac{3^9}{2^{14}}$};
        \node (B2) at (2,0) {$\displaystyle\frac{3^4}{2^{6}}$};

         \draw[->, thick, >=stealth, bend left=75] (A3) to node[above] {\( J_4^{-1} \)} (B1);
        \draw[->, thick, >=stealth, bend left=75] (B1) to node[above] {\( I_4^{-1} \)} (B2);

        \end{scope}

        \begin{scope}[shift={(4,0)}]
            \draw[thick] (0,-0.7) -- (3,-0.7);
            \draw[thick] (3,0.5) -- (3, 1.75);
            \draw[thick] (3,-0.7) -- (3, -0.5);
            \draw[thick] (3,1.75) -- (0,1.75);

        \node (C1) at (1,0) {$\displaystyle\frac{3^{11}}{2^{17}}$};
        \node (C2) at (2,0) {$\displaystyle\frac{3^6}{2^{9}}$};
        \node (C3) at (3,0) {$\displaystyle\frac{3^1}{2^{1}}$};

        \draw[->, thick, >=stealth, bend left=75] (B2) to node[above] {\( J_4^{-1} \)} (C1);
        \draw[->, thick, >=stealth, bend left=75] (C1) to node[above] {\( I_4^{-1} \)} (C2);
        \draw[->, thick, >=stealth, bend left=75] (C2) to node[above] {\( I_4^{-1} \)} (C3);

        \end{scope}

        \begin{scope}[shift={(7,0)}]
            \draw[thick] (0,-0.7) -- (2,-0.7);
            \draw[thick] (2,0.5) -- (2, 1.75);
            \draw[thick] (2,-0.7) -- (2, -0.5);
            \draw[thick] (2,1.75) -- (0,1.75);

        \node (D1) at (1,0) {$\displaystyle\frac{3^8}{2^{12}}$};
        \node (D2) at (2,0) {$\displaystyle\frac{3^3}{2^{4}}$};

         \draw[->, thick, >=stealth, bend left=75] (C3) to node[above] {\( J_4^{-1} \)} (D1);
        \draw[->, thick, >=stealth, bend left=75] (D1) to node[above] {\( I_4^{-1} \)} (D2);

        \end{scope}

        \begin{scope}[shift={(9,0)}]
            \draw[thick] (0,-0.7) -- (3,-0.7);
            \draw[thick] (3,0.5) -- (3, 1.75);
            \draw[thick] (3,-0.7) -- (3, -0.5);
            \draw[thick] (3,1.75) -- (0,1.75);

        \node (E1) at (1,0) {$\displaystyle\frac{3^{10}}{2^{15}}$};
        \node (E2) at (2,0) {$\displaystyle\frac{3^5}{2^{7}}$};
        \node (E3) at (3,0) {$\displaystyle\frac{3^0}{2^{-1}}$};

        \draw[->, thick, >=stealth, bend left=75] (D2) to node[above] {\( J_4^{-1} \)} (E1);
        \draw[->, thick, >=stealth, bend left=75] (E1) to node[above] {\( I_4^{-1} \)} (E2);
        \draw[->, thick, >=stealth, bend left=75] (E2) to node[above] {\( I_4^{-1} \)} (E3);

        \end{scope}        

    \end{tikzpicture}
\end{equation}

Note that the boundaries between the blocks of the $b_i$-note Pythagorean scale are precisely the notes of the $b_{i-1}$-note Pythagorean scale.  This can be readily proven from the recursions \ref{eq:recursion}, and it is also the key to the proof of Theorem~\ref{thm:old}.

\begin{remark}
\label{rmk:whyCF}
Heuristically, one might expect that the Pythagorean scales whose size is the denominator of a rational approximation of $\log_2(3)$ are among the most evenly-spaced.  Indeed, the approximate equality
\[\log_2(3) \approx \frac{a_i}{b_i}\]
rearranges to
\[3^{b_i} \approx 2^{a_i}.\]
This means that, as one adds the notes with numerator $3^0$, $3^1$, and so on to a Pythagorean scale, then after adding the note with numerator $3^{b_{i}-1}$ (which is the last note added to the $b_i$-note Pythagorean scale), the next note would be either 
\[\frac{3^{b_i}}{2^{a_i}} \approx 1 \;\;\; \text{ or } \;\;\;  \frac{3^b}{2^{a_i -1}} \approx 2,\]
depending on whether $3^{b_i}> 2^{a_i}$ or $3^{b_i}<2^{a_i}$.  In this sense, the $b_i$-note Pythagorean scale terminates when adding an additional note approximately returns (up to a factor of 2) to the starting note of the scale, in much the same way that the unique $n$-note scale with the $1$-step property,
\[3^0, \; 3^{1/n}, \; 3^{2/n}, \; \ldots, 3^{n-1/n}, \; 2,\]
terminates when it returns (up to a factor of 2) to its starting note.
\end{remark}

In light of this heuristic, it is reasonable to expect that the denominators not only of convergents of $\log_2(3)$, but also of semi-convergents (which are also rational approximations of $\log_2(3)$), give rise to Pythagorean scales with the 2-step property.  This is indeed the case, and to prove it, we turn in the next section to a method for deleting notes from the $b_i$-note scale to produce a smaller Pythagorean scale that nevertheless retains the 2-step property.

\section{The Deletion Procedure}
\label{sec:proof1}

To motivate the deletion procedure, we note that, if the $b_i$-note Pythagorean scale has Type A according to Theorem~\ref{thm:old}, then deleting each of the notes between a step of size $I_i$ and a step of size $J_i$ will produce a scale with the 2-step property.  For instance, applying this idea to the 5-note Pythagorean scale \eqref{eq:P5steps} produces a scale with step sizes $I_3$ and $I_3J_3$:

\begin{equation}
\label{eq:deletionP5toP3}
    \begin{tikzpicture}
        \tikzstyle{dot}=[circle, fill, inner sep=0pt, minimum size=0.3cm]

        \draw[thick] (0,-0.7) -- (3,-0.7);
        \draw[thick] (3,-0.7) -- (3, -0.5);
        \draw[thick] (3,0.5) -- (3,1.75);
        \draw[thick] (3,1.75) -- (0,1.75);
        \draw[thick] (0,1.75) -- (0,0.5);
        \draw[thick] (0,-0.5) -- (0,-0.7);

        \node (A1) at (0,0) {$\displaystyle\frac{3^0}{2^0}$};
        \node (A2) at (1,0) {$\displaystyle\frac{3^2}{2^3}$};
        \node (A3) at (2,0) {$\displaystyle\frac{3^4}{2^6}$};
        \node (A4) at (3,0) {$\displaystyle\frac{3^1}{2^1}$};

        \draw[thick] (1.7, -0.5) -- (2.3, 0.5);
        \draw[thick] (2.3, -0.5) -- (1.7, 0.5);

        \draw[->, thick, >=stealth, bend left=75] (A1) to node[above] {\( I_3 \)} (A2);
        \draw[->, thick, >=stealth, bend left=75] (A2) to node[above] {\( I_3J_3 \)} (A4);

        \begin{scope}[shift={(3,0)}]
            \draw[thick] (0,-0.7) -- (2,-0.7);
            \draw[thick] (2,0.5) -- (2, 1.75);
            \draw[thick] (2,-0.7) -- (2, -0.5);
            \draw[thick] (2,1.75) -- (0,1.75);

        \node (B2) at (1,0) {$\displaystyle\frac{3^3}{2^4}$};
        \node (B3) at (2,0) {$\displaystyle\frac{3^0}{2^{-1}}$};

        \draw[thick] (0.7, -0.5) -- (1.3, 0.5);
        \draw[thick] (1.3, -0.5) -- (0.7, 0.5);

         \draw[->, thick, >=stealth, bend left=75] (A4) to node[above] {\( I_3J_3 \)} (B3);

        \end{scope}

    \end{tikzpicture}
\end{equation}

Somewhat surprisingly, the result is not only a list of numbers with the 2-step property, but it is a Pythagorean scale---in this case, the 3-note Pythagorean scale---because its numerators are precisely $3^0$, $3^1$, and $3^2$.

The proof of the following theorem verifies that this phenomenon holds in general.  Furthermore, one need not combine just a single step of size $I_i$ with the subsequent $J_i$ as above, but one can combine as many as $k_i -1$ steps of size $I_i$ with the subsequent $J_i$ (in Type A, and similarly in Type B).  The precise statement is the following.

\begin{proposition}
\label{prop:deletion}
For any integers $i \geq 1$ and $0 \leq k \leq k_i-1$, the $(b_i - kb_{i-1})$-note Pythagorean scale has the $2$-step property.
\end{proposition}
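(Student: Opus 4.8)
The plan is to start from Theorem~\ref{thm:old}, which gives an explicit block decomposition of the $b_i$-note Pythagorean scale into $b_{i-1}$ blocks of two possible shapes, and to show that a carefully chosen deletion of $k$ notes from each block produces exactly the $(b_i - kb_{i-1})$-note Pythagorean scale. Concretely, in Type A each block has the form $I_i^{m}J_i$ with $m \in \{k_i, k_i-1\}$; I would delete the $k$ notes sitting strictly between the first step of size $I_i$ and the first of the remaining steps, thereby merging $k$ consecutive $I_i$-steps into a single step of size $I_i^{k+1}$ wherever a block begins with at least $k+1$ copies of $I_i$, and merging them with the $J_i$ otherwise. Since $0 \le k \le k_i - 1$, every block has at least $k_i - 1 + 1 = k_i \ge k+1$ steps, so the deletion is always well-defined. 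After deletion, each block contributes exactly $k$ fewer notes, so the total is $b_i - k b_{i-1}$ notes, and the step sizes that survive are drawn from the two-element set $\{I_i^{k+1},\ I_i^{k}J_i\}$ (in Type A; the inverses in Type B), giving the 2-step property provided both actually occur — which they do because some blocks have $k_i$ steps of size $I_i$ and some have $k_i - 1$, a fact guaranteed by Theorem~\ref{thm:old} as long as $k_i \ge 2$ (and the degenerate case $k_i = 1$ forces $k = 0$, where the statement is just Theorem~\ref{thm:old} itself).

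The substantive claim is that the resulting list of numbers is not merely a $2$-step scale but is \emph{the} Pythagorean scale $P_{b_i - k b_{i-1}}$: that is, its numerators are precisely $3^0, 3^1, \ldots, 3^{b_i - k b_{i-1} - 1}$ (together with the endpoint $2 = 3^0/2^{-1}$). I would prove this by tracking which notes get deleted in terms of their exponents. The $b_i$-note scale consists of the notes $3^j/2^{a(j)}$ for $0 \le j < b_i$, where $a(j)$ is the unique exponent placing the note in $[1,2)$; the key structural fact, highlighted in the excerpt after \eqref{eq:P12steps}, is that the block boundaries are exactly the notes of the $b_{i-1}$-note scale, i.e.\ the notes with $j \in \{0, b_{i-1}, 2b_{i-1}, \ldots\}$ in the appropriate reindexing. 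Within each block the numerator exponents increase by $b_{i-1}$ at each $I_i$-step (since $I_i = 3^{b_{i-1}}/2^{a_{i-1}}$) and by $b_{i-1} - b_i$ at the $J_i$-step. The notes I propose to delete are those reached from a block boundary by $1, 2, \ldots, k$ consecutive $I_i$-steps, so their exponents are $c + b_{i-1}, c + 2b_{i-1}, \ldots, c + k b_{i-1}$ where $c$ ranges over the block-boundary exponents. I would then check — this is the main computation, and I expect it to be the principal obstacle — that the exponents that \emph{survive} are exactly a contiguous initial segment $\{0, 1, \ldots, b_i - k b_{i-1} - 1\}$, possibly after shifting; equivalently, that deleting the residues I have described from $\{0, 1, \ldots, b_i - 1\}$ (viewed through the combinatorics of the block structure, which is itself governed by the Euclidean-algorithm relation $b_i = k_i b_{i-1} + b_{i-2}$) leaves a set that, as a multiset of numerators, is a Pythagorean scale of the smaller size.

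The cleanest route through that obstacle is probably to avoid reasoning about residues directly and instead argue by a double application of the block structure: the deleted notes within each length-$k_i$ or length-$(k_i-1)$ block are precisely the interior notes of a sub-run of $I_i$-steps, and collapsing $k$ such steps replaces the local pattern $I_i^{k+1}$ by $I_i^{k+1}$ as a single step, whose value $3^{(k+1)b_{i-1}}/2^{(k+1)a_{i-1}}$ is the generator of the $(b_i - k b_{i-1})$-note scale in the same way that $I_i$ generated the $b_i$-note scale one level up. In other words, I would show that the deletion procedure intertwines the "level $i$" description with a "level $i$ with $k_i$ replaced by $k_i - k$" description, so that the recursion \eqref{eq:recursion} with $k_i \rightsquigarrow k_i - k$ produces denominator $b_i - k b_{i-1}$, and Theorem~\ref{thm:old} (or its proof) applies verbatim to identify the output as the corresponding Pythagorean scale. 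Finally I would record the two surviving step sizes explicitly — $I_i^{k+1}$ and $I_i^k J_i$ in Type A, their inverses in Type B — and confirm, using that $I_i$ and $J_i$ are multiplicatively independent (they involve $3^{b_{i-1}}/2^{a_{i-1}}$ and $3^{b_i}/2^{a_i}$ with $\det\left(\begin{smallmatrix} b_{i-1} & b_i \\ a_{i-1} & a_i \end{smallmatrix}\right) = \pm 1$), that these two sizes are genuinely distinct, completing the verification of the 2-step property.
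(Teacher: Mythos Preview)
Your overall architecture---start from the block decomposition of Theorem~\ref{thm:old} and delete $k$ notes from each of the $b_{i-1}$ blocks---matches the paper's, but the specific deletion you describe is the wrong one, and the argument collapses because of it.

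You propose to delete the first $k$ interior notes of each block, i.e.\ the notes with exponents $c+b_{i-1},\ldots,c+kb_{i-1}$ where $c$ is a block boundary. There are two problems. First, this does \emph{not} produce a $2$-step scale: in a Type~A block with $m\in\{k_i,k_i-1\}$ steps of size $I_i$, removing those notes leaves the step pattern $I_i^{k+1},\,I_i,\ldots,I_i,\,J_i$ (with $m-k-1$ surviving single $I_i$'s), so for $k<k_i-1$ you get three distinct step sizes $I_i^{k+1}$, $I_i$, $J_i$; and when $k=k_i-1$ the long and short blocks contribute $I_i^{k_i},J_i$ and $I_i^{k_i-1}J_i$ respectively, again three sizes. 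Second, the surviving exponents are $\{0,\ldots,b_{i-1}-1\}\cup\{(k+1)b_{i-1},\ldots,b_i-1\}$, which is not an initial segment, so the result is not a Pythagorean scale. (Try $i=3$, $k=1$: you delete exponents $2$ and $3$, leaving $\{3^0,3^1,3^4\}$, which is not $P_3$ and has three step sizes $81/64$, $32/27$, $4/3$.) Your claimed step set $\{I_i^{k+1},I_i^kJ_i\}$ is therefore incorrect; the ``principal obstacle'' you flag is not merely a computation to be carried out but a check that would fail.

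The fix is to reverse which end of each block you prune. The $(b_i-kb_{i-1})$-note Pythagorean scale is, by definition, obtained by deleting the notes with the \emph{largest} exponents $b\ge b_i-kb_{i-1}$, so the question of whether the survivors form an initial segment is automatic---the real content is locating those high-exponent notes inside the block structure. The paper shows by induction on $k$ that they are exactly the last $k$ notes before each $J_i$-step (in Type~A): the base case uses that a note with $b\ge b_i-b_{i-1}$ cannot be followed by an $I_i$-step (the next exponent would exceed $b_i-1$), and the induction step uses that a note with $b_i-kb_{i-1}\le b<b_i-(k-1)b_{i-1}$ cannot be followed by a $J_i$-step (the next exponent would be negative). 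Deleting those notes merges each terminal run $I_i,\ldots,I_i,J_i$ into a single step $I_i^kJ_i$, leaving the two step sizes $I_i$ and $I_i^kJ_i$---not $I_i^{k+1}$.
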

\begin{proof}
The case $k=0$ is the content of Theorem~\ref{thm:old}, so we assume that $1 \leq k \leq k_i - 1$.

Suppose, first, that the $b_i$-note Pythagorean scale has Type A, so that its step sizes are $I_i$ and $J_i$ according to Theorem~\ref{thm:old}.  By definition, the $(b_i - kb_{i-1})$-note is obtained from the $b_i$-note Pythagorean scale by deleting the notes $\frac{3^b}{2^a}$ with $b \geq b_i - kb_{i-1}$.   We prove, by induction on $k$, that each of these deleted notes is among the $k$ notes preceding a step of size $J_i$ in the $b_i$-note Pythagorean scale.

As a base case, let $k=1$, so the claim is that whenever $b \geq b_i - b_{i-1}$, the note $\frac{3^b}{2^a}$ in the $b_i$-note Pythagorean scale precedes a step of size $J_i$.  If not, then this note precedes a step of size $I_i$, but this would mean that the following note is
\[\frac{3^b}{2^a} \cdot I_i = \frac{3^{b+b_{i-1}}}{2^{a+a_{i-1}}},\]
which is impossible in the $b_i$-note scale since the power of 3 in the above numerator is
\[b+b_{i-1} \geq b_i.\]
This verifies the base case.

Now, let $k \geq 2$, and suppose that whenever
\[b \geq b_i - (k-1)b_{i-1},\]
the note $\frac{3^b}{2^a}$ in the $b_i$-note Pythagorean scale is among the $k-1$ notes preceding a step of size $J_i$.  We aim to prove that, whenever
\[b \geq b_i - kb_{i-1},\]
the note $\frac{3^b}{2^a}$ in the $b_i$-note Pythagorean scale is among the $k$ notes preceding a step of size $J_i$. The only range in which this does not follow immediately from our inductive assumption is
\[b_i - kb_{i-1} \leq b  < b_i - (k-1)b_{i-1},\]
so we restrict to considering $b$ in this range.  For such $b$, the step after $\frac{3^b}{2^a}$ must have size $I_i$, since if not, the following note would be
\[\frac{3^b}{2^a} \cdot J_i = \frac{3^{b+b_{i-1} - b_i}}{2^{a+a_{i-1}-a_i}},\]
which is impossible because the power of 3 in the above numerator is
\[b+b_{i-1} - b_i  < -(k-2)b_{i-1} \leq 0.\]
This means that the note after $\frac{3^b}{2^a}$ in the $b_i$-note Pythagorean scale is
\[\frac{3^b}{2^a} \cdot I_i = \frac{3^{b+b_{i-1}}}{2^{a+a_{i_1}}},\]
in which the power of 3 in the numerator is
\[b+b_{i-1} \geq b_i - (k-1)b_{i-1}.\]
Thus, by our inductive hypothesis, the note after $\frac{3^b}{2^a}$ is among the $k-1$ notes preceding a step of size $J_i$, which means that $\frac{3^b}{2^a}$ itself is among the $k$ notes preceding a step of size $J_i$, as desired.

We have now proven that each of the notes deleted from the $b_i$-note Pythagorean scale to produce the $(b_i - kb_{i-1})$-note Pythagorean scale is among the $k$ notes preceding a step of size $J_i$.  Since there are precisely $kb_{i-1}$ steps of size $J_i$ by Theorem~\ref{thm:old}, this shows that the notes deleted from the $b_i$-note Pythagorean scale to produce the $(b_i - kb_{i-1})$-note Pythagorean scale are precisely the groups of $k$ notes preceding a step of size $J_i$.  Therefore---in the case that the $b_i$-note Pythagorean scale has Type A---the $(b_i - kb_{i-1})$-note Pythagorean scale has the 2-step property, with the two step sizes being $I_i$ and $I_i^kJ_i$.

All of this reasoning carries over essentially unchanged in the case when the $b_i$-note Pythagorean scale has Type B, in which case the key point is that whenever $b \geq b_i - kb_{i-1}$, the note $\frac{3^b}{2^a}$ in the $b_i$-note Pythagorean scale is among the $k$ notes following a step of size $J_i^{-1}$.  Thus, the $(b_i - kb_{i-1})$-note scale, which is obtained from the $b_i$-note scale by deleting these notes, has the 2-step property with step sizes $I_i^{-1}$ and $J_i^{-1}I_i^{-k}$.
\end{proof}

\section{Proof of Main Theorem}
\label{sec:proof2}

Proposition~\ref{prop:deletion} proves one direction of Theorem~\ref{thm:main}.  To see this, note that by \eqref{eq:recursion}, the scale sizes in Proposition~\ref{prop:deletion} are
\begin{equation}
\label{eq:semisrevised}
b_i - kb_{i-1} = b_{i-2} + (k_i-k)b_{i-1}
\end{equation}
with
\[1 \leq k_i - k \leq k_i.\]
Comparing this to the definition in equation \eqref{eq:semiconvergents}, one sees that these are precisely the denominators of semi-convergents of $\log_2(3)$, so the Pythagorean scales whose sizes are these denominators all have the 2-step property.

We now turn to the reverse implication, completing the proof of Theorem~\ref{thm:main}.

\begin{proof}[Proof of Theorem~\ref{thm:main}]
Let $n \geq 2$ be an integer for which the $n$-note Pythagorean scale has the $2$-step property.  Because the sequence of integers $b_i$ is increasing (as one can see, for instance, from \eqref{eq:recursion}), there exists some integer $i$ for which
\[b_{i-1} \leq n < b_i.\]
Our goal is to prove that $n$ is the denominator of a semi-convergent of $\log_2(3)$, which, by the re-writing \eqref{eq:semisrevised}, is equivalent to proving that
\[n = b_i - kb_{i-1}\]
for some integer $1 \leq k \leq k_i -1$.  Suppose, by way of contradiction, that this is not the case.  Then we can choose $k$ such that 
\begin{equation}
    \label{eq:contradiction}
    b_i - kb_{i-1} < n < b_i - (k-1)b_{i-1}.
\end{equation}

Similarly to the proof of Proposition~\ref{prop:deletion}, the $n$-note Pythagorean scale can be obtained from the $b_i$-note Pythagorean scale by deleting the notes $\frac{3^b}{2^a}$ with $b \geq n$.  The key observation is that, if the $b_i$-note scale is organized into $b_{i-1}$ blocks as in Theorem~\ref{thm:old}, then the notes with numerators
\[3^{b_i-1}, \; 3^{b_i - 2}, \; 3^{b_i -3}, \ldots, \; 3^{b_i - b_{i-1}}\]
consist of exactly one note from each block; indeed, as we saw in the proof of the base case of Proposition~\ref{prop:deletion}, these are the last notes in each block in Type A, or the first notes in each block in Type B.  Similarly, the notes with numerators 
\[3^{b_i-b_{i-1}-1}, \; 3^{b_i - b_{i-1} -2}, \; 3^{b_i -b_{i-1}-3}, \ldots, \; 3^{b_i - 2b_{i-1}}\]
consist of exactly one note from each block---namely (as we saw in the proof of the induction step of Proposition~\ref{prop:deletion}), they are the second-to-last notes in each block in Type A, or the second notes in each block in Type B.  Continuing in this way, one finds that the notes with numerators
\begin{equation}
    \label{eq:laststep}
    3^{b_i-(k-1)b_{i-1}-1}, \; 3^{b_i - (k-1)b_{i-1} -2}, \; 3^{b_i -(k-1)b_{i-1}-3}, \ldots, \; 3^{b_i - kb_{i-1}}
\end{equation}
consist of exactly one note from each block for any $1 \leq k \leq k_i -1$.  For instance, when $i=4$, we have
\[b_i = 12, \;\;\; b_{i-1} = 5, \;\;\; k_i = 2\]
by \eqref{eq:tableofks}, so we must have $k=1$ in \eqref{eq:laststep}, and the resulting numerators are
\[3^{11}, 3^{10}, 3^9, 3^8, 3^7,\]
which are indeed the numerators of the first note in each block of \eqref{eq:P12steps}.  Figure~\ref{figure:P41} illustrates the same phenomenon for $i=5$, for which $b_i =41$.

\begin{figure}[ht]
    \begin{tikzpicture}
        \tikzstyle{dot}=[circle, fill, inner sep=0pt, minimum size=0.3cm]

        \draw[thick] (0,-0.7) -- (4,-0.7);
        \draw[thick] (4,-0.7) -- (4, -0.5);
        \draw[thick] (4,0.5) -- (4,1.75);
        \draw[thick] (4,1.75) -- (0,1.75);
        \draw[thick] (0,1.75) -- (0,0.5);
        \draw[thick] (0,-0.5) -- (0,-0.7);

        \node (A1) at (0,0) {$\displaystyle\frac{3^0}{2^0}$};
        \node (A2) at (1,0) {$\displaystyle\frac{3^{12}}{2^{19}}$};
        \node (A3) at (2,0) {$\displaystyle\frac{3^{24}}{2^{38}}$};
        \node (A4) at (3,0) {$\displaystyle\frac{3^{36}}{2^{57}}$};

        \draw[->, thick, >=stealth, bend left=75] (A1) to node[above] {\( I_5 \)} (A2);
        \draw[->, thick, >=stealth, bend left=75] (A2) to node[above] {\( I_5 \)} (A3);
        \draw[->, thick, >=stealth, bend left=75] (A3) to node[above] {\( I_5 \)} (A4);
        \draw[->, thick, >=stealth, bend left=75] (A3) to node[above] {\( I_5 \)} (A4);

        \begin{scope}[shift={(3,0)}]
            \draw[thick] (0,-0.7) -- (4,-0.7);
            \draw[thick] (4,0.5) -- (4, 1.75);
            \draw[thick] (4,-0.7) -- (4, -0.5);
            \draw[thick] (4,1.75) -- (0,1.75);

        \node (B1) at (1,0) {$\displaystyle\frac{3^7}{2^{11}}$};
        \node (B2) at (2,0) {$\displaystyle\frac{3^{19}}{2^{30}}$};
        \node (B3) at (3,0) {$\displaystyle\frac{3^{31}}{2^{49}}$};

         \draw[->, thick, >=stealth, bend left=75] (A4) to node[above] {\( J_5 \)} (B1);
        \draw[->, thick, >=stealth, bend left=75] (B1) to node[above] {\( I_5 \)} (B2);
        \draw[->, thick, >=stealth, bend left=75] (B2) to node[above] {\( I_5 \)} (B3);

        \end{scope}

        \begin{scope}[shift={(6,0)}]
             \draw[thick] (0,-0.7) -- (5.5,-0.7);
             \draw[thick] (5,0.5) -- (5, 1.75);
             \draw[thick] (5,-0.7) -- (5, -0.5);
             \draw[thick] (5.5,1.75) -- (0,1.75);

        \node (C1) at (1,0) {$\displaystyle\frac{3^{2}}{2^{3}}$};
        \node (C2) at (2,0) {$\displaystyle\frac{3^{14}}{2^{22}}$};
        \node (C3) at (3,0) {$\displaystyle\frac{3^{26}}{2^{41}}$};
        \node (C4) at (4,0) {$\displaystyle\frac{3^{38}}{2^{60}}$};
        \node (C5) at (5,0) {$\displaystyle\frac{3^{9}}{2^{14}}$};
        \node at (5.75,0) {$\cdots$};

        \draw[->, thick, >=stealth, bend left=75] (B3) to node[above] {\( J_5 \)} (C1);
        \draw[->, thick, >=stealth, bend left=75] (C1) to node[above] {\( I_5 \)} (C2);
        \draw[->, thick, >=stealth, bend left=75] (C2) to node[above] {\( I_5 \)} (C3);
        \draw[->, thick, >=stealth, bend left=75] (C3) to node[above] {\( I_5 \)} (C4);
        \draw[->, thick, >=stealth, bend left=75] (C4) to node[above] {\( J_5 \)} (C5);

        \end{scope}

    \end{tikzpicture}
    \caption{The beginning of the 41-note scale.  Here, note that $b_5 = 41$, $b_4=12$, and $k_5=3$, so the numerators as in \eqref{eq:laststep} with $k=1$ are $3^{40}, 3^{39}, \ldots, 3^{29}$; these are the numerators of the last note in each block.  Similarly, the numerators as in \eqref{eq:laststep} with $k=2$ are $3^{28}, 3^{27}, \ldots, 3^{17}$, which are the second-to-last note in each block.}
    \label{figure:P41}
\end{figure}
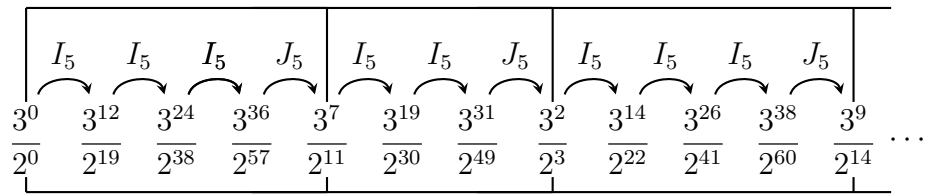

By \eqref{eq:contradiction}, the process of deleting the notes $\frac{3^b}{2^a}$ with $b \geq n$ terminates at some point in the middle of deleting the  notes with numerators as in \eqref{eq:laststep}, so some blocks have undergone $k$ deletions while others have undergone only $k-1$.  Thus, the resulting $n$-note Pythagorean scale has steps of three sizes: $I_n$, $I_n^{k-1}J_n$, and $I_n^kJ_n$ in Type A, or $I_n^{-1}$, $J_n^{-1}I_n^{-(k-1)}$, and $J_n^{-1}I_n^{-k}$ in Type B.  This contradicts the assumption that the scale has the 2-step property, so we have completed the proof of Theorem~\ref{thm:main}.
\end{proof}

\bibliographystyle{plain}
\bibliography{2StepBib}

\end{document}